\newcommand*{\mailto}[1]{\href{mailto:#1}{\nolinkurl{#1}}}
\newcommand{\bbC}{{\mathbb{C}}}
\newcommand{\bbD}{{\mathbb{D}}}
\newcommand{\bbN}{{\mathbb{N}}}
\newcommand{\bbR}{{\mathbb{R}}}
\newcommand{\cB}{{\mathcal B}}
\newcommand{\cE}{{\mathcal E}}
\newcommand{\cH}{{\mathcal H}}
\newcommand{\cI}{{\mathcal I}}
\newcommand{\cK}{{\mathcal K}}
\newcommand{\cU}{{\mathcal U}}
\DeclareMathOperator{\rank}{rank}
\DeclareMathOperator{\ran}{ran}
\DeclareMathOperator{\dom}{dom}
\DeclareMathOperator{\linspan}{lin.span}
\DeclareMathOperator*{\slim}{s-lim}
\renewcommand{\Im}{\text{\rm Im}}
\newcommand{\beq}{\begin{equation}}
\newcommand{\enq}{\end{equation}}
\newcommand{\no}{\notag}
\newcommand{\lb}{\label}
\newcommand{\ol}{\overline}
\newcommand{\wti}{\widetilde}
\newcommand{\bi}{\bibitem}
\let\geq\geqslant
\let\leq\leqslant
\def\theequation{\@arabic\c@equation}
\numberwithin{equation}{section}
\newtheorem{theorem}{Theorem}[section]
\newtheorem{lemma}[theorem]{Lemma}
\newtheorem{conjecture}[theorem]{Conjecture}
\newtheorem{hypothesis}[theorem]{Hypothesis}
\newtheorem{example}[theorem]{Example}
\theoremstyle{remark}
\newtheorem{remark}[theorem]{Remark}
\begin{document}

\title[A Problem in Perturbation Theory]{On a Problem in Eigenvalue Perturbation Theory}

\author[F.\ Gesztesy]{Fritz Gesztesy}
\address{Department of Mathematics,
University of Missouri, Columbia, MO 65211, USA}
\email{\mailto{gesztesyf@missouri.edu}}
\urladdr{\url{http://www.math.missouri.edu/personnel/faculty/gesztesyf.html}}

\author[S.\ N.\ Naboko]{Sergey N.\ Naboko}
\address{Department of Mathematical Physics, St.~Petersburg  State University, Ulianovskaia 1, NIIF, St.~Peterhof, St.~Petersburg,
Russian Federation, 198504}
\email{\mailto{sergey.naboko@gmail.com}}

\author[R.\ Nichols]{Roger Nichols}
\address{Mathematics Department, The University of Tennessee at Chattanooga, 415 EMCS Building, Dept. 6956, 615 McCallie Ave, Chattanooga, TN 37403, USA}
\email{\mailto{Roger-Nichols@utc.edu}}
\urladdr{\url{http://www.utc.edu/faculty/roger-nichols/index.php}}

\date{\today}
\thanks{S.N. is supported by grants NCN t 2013/09/BST1/04319, RFBR 12-01-00215-a, and Marie Curie grant PIIF-GA-2011-299919.}
\thanks{R.N. gratefully acknowledges support from an AMS--Simons Travel Grant.} 
\thanks{{\it J. Math. Anal. Appl.} (to appear).} 
\subjclass[2010]{Primary 15A22, 47A55, 47A75; Secondary 15A18.}
\keywords{Eigenvalues, linear pencils, perturbation theory.}

\begin{abstract} 
We consider additive perturbations of the type $H_t=H_0+t V$, $t\in [0,1]$, where $H_0$ and $V$ are self-adjoint operators in a separable Hilbert space $\cH$ and $V$ is bounded.  In addition, we assume that the range of $V$ is a generating (i.e., cyclic) subspace for $H_0$.  If $\lambda_0$ is an eigenvalue of $H_0$, then under the additional assumption that $V$ is nonnegative, the Lebesgue measure of the set of all $t\in [0,1]$ for which $\lambda_0$ is an eigenvalue of $H_t$ is known to be zero.  We recall this result with its proof and show by explicit counterexample that the nonnegativity assumption $V\geq 0$ cannot be removed.  
\end{abstract}

\maketitle


\section{Introduction}  \lb{s1}

The focus of this paper is a natural conjecture concerning the eigenvalues of a one-parameter family of self-adjoint perturbations $H_t$, $t\in [0,1]$, of the form
\begin{equation}
H_t = H_0 + t V,\quad \dom(H_t) = \dom(H_0), \quad t\in [0,1],
\end{equation}
where $H_0$ is a (possibly unbounded) self-adjoint operator in a separable Hilbert space $\cH$ and $V$ is a bounded self-adjoint operator in $\cH$.  If $\lambda_0\in \sigma_p(H_0)$ (with $\sigma_p(T)$ denoting the point spectrum, that is, the set of eigenvalues, of a densely defined closed operator $T$ in $\cH$), a natural question is, ``For which values of $t\in [0,1]$ is $\lambda_0$ also an eigenvalue of $H_t$?''  In the general context, a direct answer to this question is likely out of reach.  This is not a problem, however, as many of the applications where this question naturally arises (e.g., in studying the eigenvalues of Anderson-type models, see \cite{NNS13}) do not require one to explicitly determine the set of $t$.  Instead, one only needs the set of such $t$ to be ``small'' in a certain sense.

Without further assumptions on $H_0$ or $V$, it is easy to construct explicit examples for which $H_t$, $t\in [0,1]$, share a common eigenvalue.  For example, choose $\phi\in \cH\backslash \{0\}$ and set $H_0 = V = (\phi, \,\cdot\, )_{{}_{\cH}}\phi$, with $(\,\cdot\, , \, \cdot \,)_{{}_{\cH}}$ denoting the inner product in $\cH$.  Obviously, $0\in \sigma_p(H_0 + t V)$ for all $t\in [0,1]$.  The problem with this example is of course that the range of $V$ is too small.  

To exclude such examples, in the general setting we shall henceforth assume that the range of $V$ is a generating subspace for $H_0$, that is, 
\begin{equation}
\cH =\ol{\linspan \, \{(H_0-zI_{\cH})^{-1} V e_n\in \cH \, \vert\, n \in \cI, \, z\in \bbC \backslash \bbR\}},
\end{equation}
for a complete orthonormal system $\{e_n\}_{n\in\cI}$,  with $\cI\subseteq\bbN$ an appropriate index set.  Then, by appealing to a special form of spectral averaging \cite[Corollary 4.2]{CH94}, it was shown in \cite[Lemma 4]{NNS13} that the Lebesgue measure of the set of $t\in [0,1]$ for which $\lambda_0$ is an eigenvalue of $H_t$ is equal to zero, that is,
\begin{equation}\lb{1.3}
|\{t\in [0,1]\, | \, \lambda_0\in \sigma_p(H_t) \}| = 0,
\end{equation}
under the additional assumption that $V$ is {\it nonnegative}, that is, $V\geq 0$.  Here, by some abuse of notation, we abbreviate the Lebesgue measure on $\bbR$ by $|\cdot|$. 

Actually, \eqref{1.3} is only a special case of the result obtained in \cite[Lemma 4]{NNS13} as, more generally, the authors of \cite{NNS13} show that for any measurable set $M$ of Lebesgue measure zero, the set of all $t$ in $[0,1]$ for which $H_t$ has an eigenvalue in $M$ has Lebesgue measure equal to zero, which is to say
\begin{equation}\lb{1.4}
|\{t\in [0,1]\, |\, \sigma_p(H_t)\cap M \neq \emptyset \}| = 0.
\end{equation}
Obviously, \eqref{1.3} is obtained by choosing $M=\{\lambda_0\}$ in \eqref{1.4}.

Upon seeing the measure zero result in \eqref{1.3}, it is natural to inquire about the extent to which the assumption that $V\geq 0$ is necessary.  This leads to the question: {\it Can one remove the assumption $V\geq 0$ and still retain the conclusion in \eqref{1.3}?}

A careful examination of the proofs to \cite[Lemma 4]{NNS13} and its key ingredient \cite[Corollary 4.2]{CH94} reveals that nonnegativity of $V$ is crucial for both, so dispensing with the assumption $V\geq 0$ (if possible) would require entirely new ideas.  However, as it turns out, the nonnegativity assumption on $V$ is absolutely crucial.  We show that without $V\geq 0$, \eqref{1.3} breaks down in dramatic fashion for in this case one can actually construct a counterexample in the finite dimensional Hilbert space $\cH = \bbC^3$ (cf.\ Example \ref{e2.9}). 

Next, we briefly summarize the contents of this paper.  In Section \ref{s2} we present Conjecture \ref{c2.2}, the natural conjecture that \eqref{1.3} continues to hold with the assumption $V\geq 0$ removed, and in Lemma \ref{l2.3}, we consider perturbations of the form $H_t = H_0 + tV$, $t\in [0,1]$, where $H_0$ and $V$ are self-adjoint and $V$ is compact.  By applying the Birman--Schwinger principle, it is shown that there exist at most finitely many $t\in [0,1]$ for which a given point $E_0\in \bbR\cap \rho(H_0)$ belongs to $\sigma_p(H_t)$.  (The assumption that $E_0\in \rho(H_0)$, the resolvent set of $H_0$, which is necessary in order to apply the Birman--Schwinger principle to $H_0 + tV$, means that this result is fundamentally different from results like \eqref{1.3}, where $\lambda_0\in \sigma_p(H_0)$ is assumed.) In Lemma \ref{l2.5}, we recall the result of \cite[Lemma 4]{NNS13} tailored to the present context, and we provide its proof for completeness.  Following Lemma \ref{l2.5}, we provide a general discussion which relates the eigenvalue problem 
\begin{equation}
H_t\psi = \lambda \psi,\quad \psi\in \cH,\; \lambda\in \bbR, \; t\in [0,1],
\end{equation}
to a linear pencil eigenvalue problem with respect to the $t$ parameter.  Example \ref{e2.7} provides an example where spectra are computed by looking at the corresponding linear pencil.  In our main Example \ref{e2.9}, we put Conjecture \ref{c2.2} to rest, showing by counterexample, that one cannot remove the assumption $V\geq 0$ from Lemma \ref{l2.5} and retain \eqref{1.3}.

Finally, we briefly summarize some of the notation used in this paper: Let $\cH$ be a separable
complex Hilbert space, $(\cdot,\cdot)_{\cH}$ the scalar product in
$\cH$ (linear in the second entry), and $I_{\cH}$ the identity operator in $\cH$.
Next, let $T$ be a linear operator mapping (a subspace of) a
Banach space into another, with $\dom(T)$, $\ran(T)$, and $\ker(T)$ denoting the
domain, range, and kernel (i.e., null space) of $T$.  If $T$ is densely defined, then $T^*$ denotes the Hilbert space adjoint of $T$.  The closure of a closable operator $S$ is denoted by $\ol S$. If $A$ is self-adjoint in $\cH$, we denote
$A_{\pm} = [|A| \pm A]/2$, employing the spectral theorem for $A$. 

The spectrum, discrete spectrum, point spectrum, continuous spectrum, residual spectrum, and resolvent set of a closed linear operator in $\cH$ 
will be denoted by $\sigma(\cdot)$, $\sigma_{d}(\cdot)$, $\sigma_{p}(\cdot)$, 
$\sigma_{c}(\cdot)$, $\sigma_{r}(\cdot)$, and $\rho(\cdot)$, respectively 
(cf., e.g., \cite[p.\ 451--452]{Ku01}). 

The Banach spaces of bounded and compact linear operators in $\cH$ are
denoted by $\cB(\cH)$ and $\cB_\infty(\cH)$, respectively. 

We denote by $E_A(\cdot)$ the family of strongly right-continuous spectral projections of a self-adjoint operator $A$ in $\cH$ (in particular, $E_A(\lambda) = E_A((-\infty,\lambda])$, 
$\lambda \in \bbR$).

\section{On an Eigenvalue Perturbation Problem}  \lb{s2}

Denoting by $|M|$ the Lebesgue measure of a measurable subset $M$ of $\bbR$, and supposing that $\{e_n\}_{n\in\cI}$ (with $\cI\subseteq\bbN$ an appropriate index set) represents a complete orthonormal system in $\cH$, we will always  assume that  we 
assume the following:

\begin{hypothesis} \lb{h2.1} 
Suppose the range of $V$ is a generating $($i.e., cyclic\,$)$ subspace for $H_0$, that is,
\begin{equation}
\cH =\ol{\linspan \, \{(H_0-zI_{\cH})^{-1} V e_n\in \cH \, \vert\, n \in \cI, 
\, z\in \bbC \backslash \bbR\}} 
\end{equation}
$($equivalently, $\cH=\ol{\linspan \, \{E_{H_0}(\lambda) V e_n\in \cH \, \vert \, n \in \cI, \, \lambda \in \bbR\}}$$)$.
\end{hypothesis}

The principal purpose of this paper is to {\bf disprove} the following somewhat naturally sounding conjecture:

\begin{conjecture} \lb{c2.2} 
Assume Hypothesis \ref{h2.1}, let $H_0$ be self-adjoint in $\cH$, and suppose 
$V = V^* \in \cB(\cH)$. Consider 
\begin{equation}
H_t = H_0 + t V, \quad \dom(H_t) = \dom(H_0), \quad t \in [0,1], 
\end{equation} 
and suppose that $\lambda_0 \in \sigma_{p}(H_0)$. Then 
\begin{equation}
|\{t \in [0,1] \,|\, \lambda_0 \in \sigma_{p}(H_t)\}| = 0.
\end{equation}
\end{conjecture}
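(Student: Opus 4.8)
Since the discussion preceding Conjecture \ref{c2.2} explicitly announces that the statement is to be \emph{disproved}, I would not attempt a proof but would instead look for a counterexample, and I would try to produce one in the smallest possible setting, namely a finite-dimensional space $\cH = \bbC^3$. The key reduction is that in finite dimensions the condition $\lambda_0 \in \sigma_{p}(H_t)$ is equivalent to $\det(H_0 + tV - \lambda_0 I_{\cH}) = 0$, and the left-hand side is a polynomial in $t$. A nonzero polynomial has only finitely many roots, so the set $\{t \in [0,1] \,|\, \lambda_0 \in \sigma_{p}(H_t)\}$ has positive Lebesgue measure if and only if this polynomial vanishes identically, i.e.\ if and only if the self-adjoint pencil $A + tV$, with $A := H_0 - \lambda_0 I_{\cH}$, is \emph{singular for every} $t$. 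Thus the entire task reduces to exhibiting a singular Hermitian pencil on $\bbC^3$ that simultaneously respects Hypothesis \ref{h2.1} and has $V$ indefinite (the feature the conjecture drops).

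To engineer such a pencil I would pass to the associated linear pencil (generalized eigenvalue) problem $A\psi = -tV\psi$ and search for a null vector that \emph{moves} with $t$. The simplest guess is an affine family $\psi_t = u + tw$ satisfying $(A + tV)\psi_t = 0$ for all $t$; collecting powers of $t$ yields the three algebraic conditions $Au = 0$, $Aw + Vu = 0$, and $Vw = 0$. I would then solve these by hand, taking $u$ to span $\ker A$ and $w$ to span $\ker V$, after which the coupling equation $Aw = -Vu$ links the two operators and, once $A$ and $V$ are written as real symmetric $3\times 3$ matrices with the prescribed kernels, pins down their remaining entries. A short computation produces explicit matrices with $\det(A + tV)\equiv 0$; concretely one can arrange that $A$ has vanishing first row and column (so $u$ is a basis vector) while $V$ has vanishing third row and column (so $w$ is another basis vector), with the off-diagonal entries matched as forced by $Aw = -Vu$.

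The remaining work is verification, and this is where the real subtlety lies. One must check that $\lambda_0 \in \sigma_{p}(H_0)$ (immediate from $Au = 0$); that $V = V^*$ but $V \not\geq 0$ (the coupling forced on $V$ renders a $2\times 2$ principal block indefinite, so nonnegativity fails automatically, consistent with the fact that the spectral-averaging argument of Lemma \ref{l2.5} must break down); and, most importantly, that $\ran(V)$ is generating for $H_0$ in the sense of Hypothesis \ref{h2.1}. This last point is the genuine obstacle: the cheapest way to make the pencil singular would be a common null vector $v$ with $Av = 0$ and $Vv = 0$, but such a $v$ is an eigenvector of $H_0$ orthogonal to $\ran(V)$ and instantly violates the generating condition, which is precisely why the \emph{moving} null vector $\psi_t$ is indispensable. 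I would therefore confirm cyclicity by checking that $\linspan\{\ran(V),\, A\,\ran(V),\, A^2\,\ran(V)\}$ fills all of $\bbC^3$, equivalently that applying $A$ to $\ran(V)$ reaches the single direction missing from $\ran(V)$. Once cyclicity is verified for the explicit matrices, the set $\{t \in [0,1] \,|\, \lambda_0 \in \sigma_{p}(H_t)\}$ equals all of $[0,1]$, has Lebesgue measure $1$, and Conjecture \ref{c2.2} is disproved.
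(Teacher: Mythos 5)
Your proposal is correct and follows essentially the same route as the paper: the paper's Example \ref{e2.9} disproves the conjecture with a $3\times 3$ counterexample whose null vector $f_t = (1,-1,-t)^{\top}$ is exactly an affine family $u+tw$ with $H_0u=0$, $Vw=0$, and $H_0w=-Vu$ (for $\lambda_0=0$), and with cyclicity checked via $\ran(V)\dotplus H_0\ran(V)=\bbC^3$. Your observation that a common null vector of $A$ and $V$ would violate Hypothesis \ref{h2.1}, forcing the null vector to move with $t$, correctly identifies the one genuine constraint in the construction.
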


Here is an elementary result in this context:

\begin{lemma} \lb{l2.3} 
Let $H_0$ and $V$ be self-adjoint in $\cH$ such that  
$V(H_0 - z_0 I_{\cH})^{-1} \in \cB_{\infty} (\cH)$ for some 
$($and hence for all\,$)$ $z_0 \in \rho(H_0)$. 
Consider a fixed $E_0 \in \bbR \cap \rho(H_0)$ and introduce $H_t = H_0 + t V$, $t \in [0,1]$. Then there exist at most finitely many $t \in [0,1]$ 
such that $E_0 \in \sigma_{p}(H_t)$. 
\end{lemma}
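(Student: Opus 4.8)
The plan is to prove the lemma via the Birman--Schwinger principle, applied to the pair $(H_0, tV)$ at the fixed regular point $E_0 \in \bbR \cap \rho(H_0)$. First I would dispose of $t = 0$: since $E_0 \in \rho(H_0)$ we have $E_0 \notin \sigma_p(H_0)$, so $t = 0$ never belongs to the exceptional set, and it suffices to count $t \in (0,1]$. Because $V(H_0 - z_0)^{-1} \in \cB_\infty(\cH)$ for some $z_0 \in \rho(H_0)$, the resolvent identity
\[
V(H_0 - E_0)^{-1} = V(H_0 - z_0)^{-1}\big[I_{\cH} + (E_0 - z_0)(H_0 - E_0)^{-1}\big]
\]
shows that the operator $B(E_0) := V(H_0 - E_0)^{-1} \in \cB(\cH)$ is in fact compact (this is the ``hence for all'' clause, applied at the real regular point $E_0$); in particular $\dom(H_0) \subseteq \dom(V)$, so every expression below is well defined.

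Next I would establish the key equivalence: for $t \in (0,1]$,
\[
E_0 \in \sigma_p(H_t) \quad \Longleftrightarrow \quad -t^{-1} \in \sigma_p\big(B(E_0)\big).
\]
For the direction needed to bound the count, suppose $H_t \psi = E_0 \psi$ with $\psi \in \dom(H_0) \setminus \{0\}$. Then $(H_0 - E_0)\psi = -tV\psi$, and setting $g := V\psi$ one checks that $g \neq 0$, for otherwise $(H_0 - E_0)\psi = 0$ would force $\psi = 0$ as $E_0 \in \rho(H_0)$. Applying $V$ to $\psi = -t(H_0 - E_0)^{-1} g$ gives $g = -t\, B(E_0) g$, that is, $B(E_0) g = -t^{-1} g$. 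For completeness the converse follows by reversing these steps, taking $\psi := (H_0 - E_0)^{-1} g$ for an eigenvector $g$ of $B(E_0)$.

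Finally I would invoke the Riesz--Schauder theory of compact operators. Since $B(E_0) \in \cB_\infty(\cH)$, its nonzero spectral points are eigenvalues forming an at most countable set whose only possible accumulation point is $0$; in particular only finitely many eigenvalues of $B(E_0)$ can lie in the closed set $(-\infty, -1]$, which is bounded away from $0$. As $t$ ranges over $(0,1]$ the quantity $-t^{-1}$ runs injectively through $(-\infty, -1]$, so by the equivalence each $t$ with $E_0 \in \sigma_p(H_t)$ produces a \emph{distinct} eigenvalue $-t^{-1} \in (-\infty,-1]$ of $B(E_0)$. Hence there are at most finitely many such $t$, which proves the claim.

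The main obstacle I anticipate is not the counting step but the careful verification of the equivalence when $V$ is merely $H_0$-compact rather than bounded: one must justify the inclusion $\dom(H_0) \subseteq \dom(V)$, the nonvanishing $g = V\psi \neq 0$, and that the compact but \emph{non-self-adjoint} operator $B(E_0) = V(H_0 - E_0)^{-1}$ still satisfies the Riesz--Schauder dichotomy for its nonzero spectrum. A fully symmetric alternative would factor $V = |V|^{1/2}\sgn(V)|V|^{1/2}$ and work with $\overline{|V|^{1/2}(H_0 - E_0)^{-1}|V|^{1/2}\sgn(V)}$, but this introduces additional square-root domain bookkeeping that the direct operator $B(E_0)$ avoids.
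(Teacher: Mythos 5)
Your proposal is correct and follows essentially the same route as the paper: the Birman--Schwinger equivalence $E_0 \in \sigma_p(H_t) \Leftrightarrow -t^{-1} \in \sigma_p\big(V(H_0-E_0 I_{\cH})^{-1}\big)$ combined with the Riesz--Schauder fact that the nonzero eigenvalues of a compact operator can only accumulate at zero. The paper simply cites the Birman--Schwinger principle and the compactness dichotomy without spelling out the verification of the equivalence or the injectivity of $t \mapsto -t^{-1}$, both of which you supply correctly.
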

\begin{proof}
Applying the Birman--Schwinger principle (cf., e.g., \cite[\S III.2]{Si71}), 
\begin{equation}
E_0 \in \sigma_{p}(H_t) \, \text{ is equivalent to } \, (-1/t) \in \sigma_{p} (V(H_0 - E_0I_{\cH})^{-1}).  
\end{equation}
Since by hypothesis, $V(H_0 - E_0I_{\cH})^{-1} \in \cB_{\infty} (\cH)$, the nonzero eigenvalues of 
$V(H_0 - E_0I_{\cH})^{-1} \in \cB_{\infty} (\cH)$ are either finite in number, or else, converge to zero (they 
need not be real). Either way, there can only be finitely many $t \in [0,1]$ such that $(-1/t)$ is an 
eigenvalue of $V(H_0 - E_0I_{\cH})^{-1}$. 
\end{proof}

However, since we had to assume $E_0 \in \rho(H_0) \cap \bbR$, this basically renders Lemma \ref{l2.3} irrelevant in connection with Conjecture \ref{c2.2}.  

The next lemma is inspired by a result due to Friedlander \cite{Fr02}, who applied it in connection with his proof of purely absolutely continuous spectrum for a certain class of 2nd 
order elliptic differential operators with periodic coefficients satisfying a symmetry condition. 

\begin{lemma} $($\cite{Fr02}, main theorem and its proof\,$)$ \lb{l2.4}
Let $H_0$ be self-adjoint in $\cH$, and suppose 
that $V^* = V \in \cB(\cH)$ and $V(H_0 + z_0 I_{\cH})^{-1} \in \cB_{\infty}(\cH)$ 
for some $($and hence for all\,$)$ $z_0 \in \rho(H_0)$. In 
addition, assume that $0 \in \rho(H_0) \cup \sigma_d(H_0)$, and \\
$(i)$ \hspace*{.01mm} $\rank(V_-) < \infty$, \\
$(ii)$ $\ker(V) = \{0\}$. \\
Then,
\begin{equation}\lb{2.5aaaa}
\{t \in \bbC \, | \, 0 \in \sigma_p(H_0 + t V)\} \, \text{ is discrete in $\bbC$}
\end{equation} 
$($i.e., a subset of $\bbC$ without finite accumulation point\,$)$. 
\end{lemma}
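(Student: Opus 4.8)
The plan is to reduce the problem to the analytic Fredholm theorem for an entire, identity-plus-compact operator function of $t$, and then to rule out the degenerate alternative of that theorem using hypotheses $(i)$ and $(ii)$. First I would set up a Birman--Schwinger-type reduction that absorbs the possible singularity at $0$. Let $P=E_{H_0}(\{0\})$ be the spectral projection of $H_0$ onto the eigenvalue $0$ (finite rank since $0\in\rho(H_0)\cup\sigma_d(H_0)$, and $P=0$ if $0\in\rho(H_0)$), and choose $c\in\bbR$ with $0\in\rho(A)$, where $A:=H_0+cP$; this is possible because $0$ is isolated in $\sigma(H_0)$ (take $c=0$ when $0\in\rho(H_0)$). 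Then $A$ is self-adjoint with $A^{-1}\in\cB(\cH)$, and since $A-H_0=cP$ is finite rank, the hypothesis $V(H_0-zI_{\cH})^{-1}\in\cB_\infty(\cH)$ transfers to $A$, giving $A^{-1}V=(VA^{-1})^*\in\cB_\infty(\cH)$ and $A^{-1}P$ of finite rank. Consequently
\[
T(t):=I_{\cH}+A^{-1}(tV-cP),\quad t\in\bbC,
\]
is an entire $\cB(\cH)$-valued function of identity-plus-compact form, and a direct manipulation of $(H_0+tV)\psi=(A+tV-cP)\psi=0$ shows that $0\in\sigma_p(H_0+tV)$ if and only if $\ker T(t)\neq\{0\}$, equivalently (by the Fredholm alternative) $T(t)$ is not invertible.

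Next I would invoke the analytic Fredholm theorem on the domain $\bbC$: either $T(t)$ is non-invertible for every $t\in\bbC$, or the set $S:=\{t\in\bbC\,|\,T(t)\text{ not invertible}\}$ is discrete. Since $S=\{t\in\bbC\,|\,0\in\sigma_p(H_0+tV)\}$, everything reduces to excluding the first alternative, i.e., to producing a single $t_0\in\bbC$ with $\ker(H_0+t_0V)=\{0\}$. When $0\in\rho(H_0)$ this is immediate, as $T(0)=I_{\cH}$ is invertible; the substance of the lemma is therefore the case $0\in\sigma_d(H_0)$, where $t=0$ itself lies in $S$.

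To treat that case, suppose (toward excluding the first alternative) that $0\in\sigma_p(H_0+tV)$ for all $t$. Because $V(H_0-zI_{\cH})^{-1}$ is compact, $\sigma_{\rm ess}(H_0+tV)=\sigma_{\rm ess}(H_0)$, so $0\notin\sigma_{\rm ess}(H_0+tV)$ and the eigenvalue $0$ is always isolated of finite multiplicity; choose unit eigenvectors $\psi_t$, $(H_0+tV)\psi_t=0$. For $s\neq t$, self-adjointness of $H_0$ and $V$ gives $-t(\psi_s,V\psi_t)_{\cH}=(\psi_s,H_0\psi_t)_{\cH}=(H_0\psi_s,\psi_t)_{\cH}=-s(\psi_s,V\psi_t)_{\cH}$, whence $(\psi_s,V\psi_t)_{\cH}=0$; by norm-continuity of the Riesz eigenprojection (valid since $0$ stays isolated) the relation extends to $s=t$, so the closed subspace $\cM:=\ol{\linspan\{\psi_t\}}$ is $V$-neutral, $(\phi,V\chi)_{\cH}=0$ for all $\phi,\chi\in\cM$. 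Here $(i)$ and $(ii)$ force $\cM$ to be finite dimensional: neutrality yields $\|V_-^{1/2}\phi\|=\|V_+^{1/2}\phi\|$ on $\cM$, and on $\cM\cap\ker(V_-)$, which has codimension at most $\rank(V_-)$ in $\cM$, this forces $\phi\in\ker(V_+)\cap\ker(V_-)=\ker(V)=\{0\}$, so $\dim\cM\leq\rank(V_-)<\infty$.

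Finally, neutrality gives $H_0\cM\subseteq\cM^\perp$ and $V\cM\subseteq\cM^\perp$, so $B:=H_0|_{\cM}$ and $C:=V|_{\cM}$ are linear maps of the finite-dimensional space $\cM$ into $\cM^\perp$, with $C$ injective by $(ii)$. Each $\psi_t$ satisfies $(B+tC)\psi_t=0$, so the affine matrix pencil $B+tC$ is non-injective for all $t$; but injectivity of $C$ means $\rank(C)=\dim\cM$, so a suitable maximal minor of $B+tC$ is a polynomial in $t$ with nonzero leading coefficient, hence nonvanishing for all but finitely many $t$ --- a contradiction. This excludes the first alternative and shows that $S$ is discrete in $\bbC$. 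I expect the main obstacle to be precisely this exclusion step: establishing finite-dimensionality of the neutral subspace $\cM$ and extracting the polynomial-pencil contradiction, since this is exactly where both the finite rank of $V_-$ and the triviality of $\ker(V)$ are genuinely used.
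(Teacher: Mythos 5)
Your proposal is essentially correct and shares the paper's core mechanism --- under the bad alternative one builds eigenvectors $\psi_t\in\ker(H_0+tV)$ for all $t$, shows their closed span is $V$-neutral, uses $\rank(V_-)<\infty$ together with $\ker(V)=\{0\}$ to force that span to be finite dimensional, and then derives a contradiction --- but you implement two of the steps genuinely differently. For the dichotomy you factor $H_0+tV=A\,T(t)$ with $A=H_0+cP$ invertible and $T(t)=I_{\cH}+A^{-1}(tV-cP)$ an entire identity-plus-compact family, and invoke the analytic Fredholm theorem; the paper instead carries out a hands-on finite-dimensional reduction (equation \eqref{2.13aaaa}) and argues via analyticity of minors to obtain a discrete exceptional set and a constant kernel dimension $N_0$. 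Your route is cleaner and more standard, though the paper's reduction has the side benefit of supplying the locally analytic selection of kernel vectors that both arguments need later. For the final contradiction the paper lets $|t_k|\to\infty$, extracts a convergent subsequence of unit eigenvectors in the finite-dimensional space $\cU$, and obtains $Vu_\infty=0$ with $\|u_\infty\|_{\cH}=1$, contradicting $(ii)$; you instead observe that $B+tC$, with $B=H_0|_{\cM}$ and $C=V|_{\cM}$ injective, is a pencil of matrices with $\dim(\cM)$ columns whose maximal minors are polynomials in $t$ with a nonvanishing top coefficient, so the pencil is injective for all but finitely many $t$. Both work; yours is more algebraic and avoids the compactness/limit step.

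Two small repairs are needed. First, with the inner product linear in the second entry, $(H_0\psi_s,\psi_t)_{\cH}=-\ol{s}\,(\psi_s,V\psi_t)_{\cH}$, so the orthogonality you obtain is $(\psi_s,V\psi_t)_{\cH}=0$ for $\ol{s}\neq t$ (as in \eqref{2.16aaaa}), not for $s\neq t$; the exceptional pairs are then all pairs $(s,\ol s)$ rather than only the diagonal, and each must be recovered by the limiting argument. Second, the Riesz projection of $H_0+tV$ associated with a small circle around $0$ is indeed norm-continuous in $t$, but it projects onto the spectral subspace of the whole eigenvalue cluster near $0$, not onto $\ker(H_0+tV)$, so it does not by itself produce a continuous family $v(t)\in\ker(H_0+tV)$ with $v(s)=\psi_s$. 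The paper obtains such a selection from the local analyticity and constant rank of its finite matrix reduction away from a discrete set; in your setup the same can be extracted from a finite-dimensional (Schur-complement) reduction of $T(t)$. Neither issue affects the architecture of your argument.
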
 
\begin{proof} 
(The case $0 \in \rho(H_0)$ is just Lemma \ref{l2.3}, but we will not use this in 
this proof.) We will prove the folklore-type result that for some discrete set 
$\cE\subset \bbC$, and 
for some $N_0\in \bbN\cup \{0\}$, $\dim(\ker(H_0+tV))= N_0$ for all $t \in \bbC \backslash \cE$.
If $N_0=0$, the proof is finished. In the case $N_0\in \bbN$, we will seek to derive a 
contradiction. 

Introducing $P_0 = E_{H_0}(\{0\})$ and 
$P_0^\bot = I_{\cH} - P_0$, then $H_0 u(t) = - t V u(t)$ implies 
$P_0^\bot H_0 P_0^\bot u(t) = - t P_0^\bot V u(t)$, and hence, 
$P_0^\bot u(t) = - t \big(H_0^\bot\big)^{-1} P_0^\bot V u(t)$, where 
$H_0^\bot = P_0^\bot H_0 P_0^\bot$. Thus, assuming $|t| \leq R$ for some 
$R>0$, we decompose, 
\begin{align}
u(t) &= P_0 u(t) + P_0^\bot u(t) = P_0 u(t) 
- t \big(H_0^\bot\big)^{-1} P_0^\bot V u(t)    \no \\
&= P_0 u(t) - t \big[\big(H_0^\bot\big)^{-1} P_0^\bot V P_0 + 
\big(H_0^\bot\big)^{-1} P_0^\bot V P_0^\bot \big] u(t)   \no \\
&= \big[P_0 - t F_0 - t \wti F_n - t G_n\big] u(t)    \no \\
&= \big[P_0 - t F_n - t G_n\big] u(t), 
\end{align}
where 
\begin{align}
& F_0 = \big(H_0^\bot\big)^{-1} P_0^\bot V P_0, \quad \rank(F_0) < \infty,   \\
& \big(H_0^\bot\big)^{-1} P_0^\bot V P_0^\bot 
= \big[\wti F_n + G_n\big] \in \cB_{\infty}(\cH),    \\
& F_n = F_0 + \wti F_n, \quad \rank\big(\wti F_n\big) < \infty, \; 
\rank(F_n) < \infty,    \\
& G_n = G P_n^\bot \, \text{ for } \, G:=(H_0^\bot)^{-1}P_0^\bot VP_0^\bot \in \cB_{\infty}(\cH), 
\quad  \|G_n\|_{\cB(\cH)} \leq (2R)^{-1}, 
\end{align}
and $P_n$ is a strictly monotone increasing sequence of orthogonal projections, $P_n \leq P_{n+1}$, $n \in \bbN$, $\slim_{n \to \infty} P_n = I_{\cH}$, 
$P_n^\bot = I_{\cH} - P_n$, $n \in \bbN$, such that for $n \in \bbN$ 
sufficiently large, 
\begin{equation}
F_n P_n^\bot = 0, \quad 
F_0 P_n^\bot = 0.    \lb{2.11a} 
\end{equation}
Thus,
\begin{equation}
u(t) = [I_{\cH} + t G_n]^{-1} [P_0 - t F_n] u(t), \quad |t| \leq R,   \lb{2.12a}
\end{equation}
and finally, applying $P_n$ to \eqref{2.12a}, employing 
\eqref{2.11a}, the equation
$[H_0 + t V]u(t)=0$, $|t| \leq R$, is reduced to the matrix equation,
\begin{equation}\lb{2.13aaaa}
[P_n - P_n(I_{\cH}+tG_n)^{-1}[P_0-tF_n][P_nu(t)=0]\quad |t| \leq R.
\end{equation}
A solution $P_nu(t)$ to \eqref{2.13aaaa} yields an element $0\neq u(t)\in \ker(H_0+tV)$ 
upon taking the component of $u(t)$ orthogonal to $P_nu(t)$ to be 
$[I_{\cH}-P_n][I_{\cH}+tG_n]^{-1}[P_0-tF_n]P_nu(t)$, assuming without loss that the projections $P_n$ are chosen so that $P_0\leq P_n$, $n\in \bbN$, and hence, 
\begin{equation}
u(t) = P_nu(t)+[I_{\cH}-P_n][I_{\cH}+tG_n]^{-1}[P_0-tF_n]P_nu(t).
\end{equation}

Thus, \eqref{2.13aaaa} either has a positive dimensional subspace of solutions 
$P_n u(t_k)$, $1 \leq k \leq N_R$, in each disk $|t| \leq R$ and $\dim(\ker(H_0+tV))=0$ outside 
the discrete set $\{t_k\}$ in the disc (and \eqref{2.5aaaa} follows in the disk), or else, 
$\det(P_n - P_n(I_{\cH} + tG_n)^{-1}[P_0-tF_n])$ vanishes identically in $|t| \leq R$.  In the latter case, 
constancy of the kernel dimension outside a discrete set follows easily from the analyticity of the minors in $t$, including those of maximal size that are not identically zero. It is enough to notice 
that in the intersection of discs $D_r$ (see below), the  dimension of the space of solutions of 
\eqref{2.13aaaa} should be the same constant for both discs despite of choosing different 
projections $P_n$ associated with them. This follows immediately from the coincidence of the  dimension of the 
solution space for \eqref{2.13aaaa} with the kernel dimension for the operator $H_0 + t V$.
 We intend to assume that the constant kernel dimension, call it $N_0$, is nonzero and then 
 ultimately arrive at a contradiction.

Denoting $D_{r}:=\{t\in \bbC\,|\, |t|\leq r\}$, $r>0$, one may choose a sequence of radii $\{R_j\}_{j=1}^{\infty}$ diverging monotonically to $+\infty$ for which the corresponding matrix equation \eqref{2.13aaaa} has a nontrivial solution $u_j(t)$ for all $|t|<R_j$ apart from a discrete set $t\in\cE_j\subset D_{R_j}$.  By taking $u(t)=u_1(t)$ if $0\leq |t|<R_1$ and $u(t)=u_j(t)$ if $R_j\leq |t|<R_{j+1}$ for some $j\in \bbN$, one concludes that there is a discrete 
set $\cE$ in $\bbC$ such that for $t \in \bbC \backslash \cE$, there exists an $N_0$-dimensional subspace of solutions $u(t) \in \dom(H_0)$, satisfying $H_0 u(t) = - t V u(t)$, $t \in \bbC \backslash \cE$.   If $N_0$  is positive one may choose a family of vectors $u(t)\in \ker(H_0+tV)$ with $\|u(t)\|_{\cH}=1$, $t\in \bbC\backslash \cE$.  Note that this family need not to be continuous in $t$.  (The exceptional points from $\cE$ in the disc $D_{R_j}$ are simply those in the disc $D_{R_{j-1}}$ together with any additional ones in the annulus $D_{R_j}\backslash D_{R_{j-1}}$.  This procedure allows to avoid the possibility of exceptional points accumulating locally as the radii diverge to infinity.)

Let $s, t \in \bbC \backslash \cE$, then 
\begin{equation}\lb{2.15aaaa}
0 = (u(s), (H_0 + t V) u(t))_{\cH} = (t - {\ol s}) (u(s), V u(t))_{\cH},
\end{equation}
that is, 
\begin{equation}\lb{2.16aaaa}
(u(s), V u(t))_{\cH} = 0, \quad s,t \in \bbC \backslash \cE, \; {\ol s} \neq t.
\end{equation}
The identity in \eqref{2.16aaaa} extends to 
\begin{equation}\lb{2.17aaaa}
(u(s), V u(s))_{\cH} = 0, \quad s \in \bbC \backslash \cE. 
\end{equation}
Indeed, \eqref{2.17aaaa} follows immediately from \eqref{2.15aaaa} if $s\in \bbC\backslash \cE$ and $\Im(s)\neq 0$.  If $s\in \bbR\backslash \cE$ is fixed, then in a sufficiently small neighborhood $|t - s|<\varepsilon$ in $\bbC$, it is possible to choose a norm-continuous vector-valued function $v(t)\in \ker(H_0+tV)$ for which $v(s)=u(s)$.  Then, in analogy to \eqref{2.15aaaa}, one obtains
\begin{equation}
0 = (u(s), (H_0 + t V) v(t))_{\cH} = (t - s) (u(s), V v(t))_{\cH},\quad 0<|t-s|<\varepsilon,
\end{equation}
so that
\begin{equation}\lb{2.19aaaa}
(u(s),Vv(t))_{\cH} = 0,\quad 0<|t-s|<\varepsilon.
\end{equation}
Now \eqref{2.17aaaa} follows immediately upon taking the limit $t \to s$ in \eqref{2.19aaaa}.  
From this point on one can follow Friedlander's argument in \cite[p.~54]{Fr02}. Setting
\begin{equation}
\cU = \ol{\linspan \, \{u(t) \in \ker(H_0 + t V) \, | \, t \in \bbC \backslash \cE\}},
\end{equation}
one concludes that
\begin{equation}
(u, V v)_{\cH} = 0, \quad u, v \in \cU, \, \text{ and hence, } \, 
(u, V u)_{\cH} = 0, \quad u \in \cU. 
\end{equation}
Thus,
\begin{equation}
(u,V u)_{\cH} \leq 0, \; u \in \cU, \, \text{ and } \, \rank(V_-) < \infty \, 
\text{ imply } \, \dim(\cU) < \infty.
\end{equation}
Next, let $u(t_k) \in \cU$, $t_k \in \bbC \backslash \cE$, $k \in \bbN$, such that 
$|t_k| \to \infty$ as $k \to \infty$. Then,
\begin{equation}
- t_k^{-1} \big(H \big|_{\cU}\big) u(t_k) = V u(t_k), \quad k \in \bbN,
\end{equation}
together with $\|u(t_k)\|_{\cH} = 1$ and $\dim(\cU) < \infty$ yields the existence 
of a subsequence $\{u(t_{k_j})\}_{j \in \bbN}$ of $\{u(t_k)\}_{k \in \bbN}$ and 
a $u_{\infty} \in \cU$, with $\|u_{\infty}\|_{\cH} = 1$, such that 
\begin{equation}
\lim_{j \to \infty} \|u(t_{k_j}) - u_{\infty}\|_{\cH} = 0 \, \text{ and } \, 
- t_{k_{j}}^{-1} \big(H_0\big|_{\cU}\big) u(t_{k_j}) = V u(t_{k_j}), \quad 
j \in \bbN.    \lb{2.13a} 
\end{equation} 
Since $H_0\big|_{\cU}$ and $V$ are bounded, taking the limit $j \to \infty$ 
in the second equality in \eqref{2.13a} then yields 
\begin{equation}
0 = V u_{\infty}.
\end{equation}
Since $\ker(V) = \{0\}$ by assumption, one finally concludes $u_{\infty} = 0$ and 
hence $\cU = \{0\}$, contradicting $N_0 \in \bbN$. 
\end{proof}

We note that condition $(i)$ in Lemma \ref{l2.4} is necessary because of 
Remark \ref{r2.10}, whereas condition $(ii)$ is necessary because of 
Example \ref{e2.9}. 

We continue with a relevant positive result in the special case where 
$0 \leq V \in \cB(\cH)$, that is derived in the proof of \cite[Lemma\ 4]{NNS13}. 

\begin{lemma} $($\cite{CH94}, \cite[Lemma\ 4 and its proof]{NNS13}$)$ \lb{l2.5} 
Assume Hypothesis \ref{h2.1},  let $H_0$ be self-adjoint in $\cH$, and 
$0 \leq V \in \cB(\cH)$. Consider 
\begin{equation}
H_t = H_0 + t V, \quad \dom(H_t) = \dom(H_0), \quad t \in [0,1], 
\end{equation} 
and assume that $\ran(V)$ is a generating $($i.e., cyclic\,$)$ subspace for $H_0$, that is, 
\begin{equation}
\cH =\ol{\linspan \, \{(H_0-zI_{\cH})^{-1} V e_n\in \cH \, \vert\, n \in \cI, \, z\in \bbC \backslash \bbR\}}     
\end{equation}
$($equivalently, $\cH=\ol{\linspan \, \{E_{H_0}(\lambda) V e_n\in \cH \, \vert \, n \in \cI, \, \lambda \in \bbR\}}$$)$.
Suppose that $\lambda_0 \in \sigma_{p}(H_0)$. Then 
\begin{equation}\lb{2.6}
|\{t \in [0,1] \,|\, \lambda_0 \in \sigma_{p}(H_t)\}| = 0.
\end{equation}
\end{lemma}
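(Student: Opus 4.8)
The plan is to derive the conclusion from the spectral averaging estimate of Combes--Hislop, \cite[Corollary 4.2]{CH94}. The only input I will take from it is the following: since $0 \le V \in \cB(\cH)$, for each $\phi \in \cH$ and each Borel set $M \subseteq \bbR$ one has
\[
\int_0^1 \big(V^{1/2}\phi, E_{H_t}(M) V^{1/2}\phi\big)_{\cH}\, dt \le |M|\,\|\phi\|_{\cH}^2 ,
\]
the precise value of the constant being immaterial for what follows. This is exactly the step where the hypothesis $V \ge 0$ is used in an essential way, through the availability of $V^{1/2}$ and the monotonicity of $t \mapsto H_t$. Specializing to $M = \{\lambda_0\}$ makes the right-hand side vanish, and the whole argument consists of propagating this vanishing.

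First I would fix $n \in \cI$, set $\phi = e_n$, and write $P_t := E_{H_t}(\{\lambda_0\})$. Because the integrand $t \mapsto \big(V^{1/2}e_n, P_t V^{1/2}e_n\big)_{\cH} = \|P_t V^{1/2}e_n\|_{\cH}^2$ is nonnegative with vanishing integral over $[0,1]$, it vanishes for a.e.\ $t$; call the resulting full-measure set $T_n$. As $\cI \subseteq \bbN$ is countable, $T := \bigcap_{n \in \cI} T_n$ still has full measure, and for every $t \in T$ we get $P_t V^{1/2}e_n = 0$ for all $n \in \cI$ at once. Since $\{e_n\}_{n\in\cI}$ is complete and $V^{1/2}$ is bounded, $\ol{\linspan\{V^{1/2}e_n \,|\, n \in \cI\}} = \ol{\ran(V^{1/2})} = \ol{\ran(V)} = \ker(V)^{\bot}$, so the vanishing says precisely that $\ran(P_t) \subseteq \ker(V)$ for every $t \in T$.

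The decisive step, and the only place Hypothesis \ref{h2.1} enters, is to show that $\ran(P_t) = \{0\}$ for $t \in T$. If $\psi \in \ran(P_t)$, then $H_t \psi = \lambda_0 \psi$ while $V\psi = 0$ by the previous paragraph, so $H_0 \psi = (H_t - tV)\psi = \lambda_0 \psi$; thus $\psi \in \ker(H_0 - \lambda_0 I_{\cH}) \cap \ker(V)$. For such a $\psi$ and arbitrary $n \in \cI$, $\lambda \in \bbR$, self-adjointness of $E_{H_0}(\lambda)$ and of $V$ gives $\big(\psi, E_{H_0}(\lambda) V e_n\big)_{\cH} = \big(V E_{H_0}(\lambda)\psi, e_n\big)_{\cH}$; but $\psi$ is an eigenvector of $H_0$ at $\lambda_0$, so $E_{H_0}(\lambda)\psi$ equals either $\psi$ or $0$, and in both cases $V E_{H_0}(\lambda)\psi = V\psi = 0$. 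Hence $\psi \perp E_{H_0}(\lambda) V e_n$ for all $n$ and $\lambda$, and the (equivalent form of the) generating condition in Hypothesis \ref{h2.1} forces $\psi = 0$. Therefore $P_t = 0$, i.e.\ $\lambda_0 \notin \sigma_p(H_t)$, for every $t \in T$, so $\{t \in [0,1] \,|\, \lambda_0 \in \sigma_p(H_t)\} \subseteq [0,1] \setminus T$ has Lebesgue measure zero.

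I expect the only genuine obstacle to be invoking the averaging bound in the right form and confirming that $V \ge 0$ is precisely what licenses it; the passage from ``a.e.\ $t$ for each fixed $n$'' to ``a.e.\ $t$ for all $n$'' is a countability bookkeeping point, and the cyclicity step is short once one notices the key fact that an eigenvector of $H_t$ lying in $\ker(V)$ is automatically an eigenvector of $H_0$ at the same eigenvalue.
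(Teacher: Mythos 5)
Your proof is correct, and while its first half coincides with the paper's, the second half takes a genuinely different route. Both arguments start from the Combes--Hislop spectral averaging bound applied to the singleton $\{\lambda_0\}$ (the paper extends $t$ to all of $\bbR$ and uses the weighted integral $\int_{\bbR}(v,V^{1/2}E_{H_t}(\{\lambda_0\})V^{1/2}v)_{\cH}\,(1+t^2)^{-1}dt=0$, but as you say the precise form is immaterial), exploit nonnegativity of the integrand to get pointwise vanishing a.e.\ in $t$, and run over a countable orthonormal system to obtain a single full-measure set on which $E_{H_t}(\{\lambda_0\})V^{1/2}=0$. The divergence is in how one upgrades $E_{H_t}(\{\lambda_0\})\big|_{\ran(V)}=0$ to $E_{H_t}(\{\lambda_0\})=0$: the paper commutes the projection with the resolvent of $H_t$ and proves, via a resolvent identity, that the cyclic subspace generated by $\ran(V)$ is the same for $H_t$ as for $H_0$, hence all of $\cH$, so the projection annihilates a dense set. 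You instead note that $\ran(E_{H_t}(\{\lambda_0\}))\subseteq\ker(V)$ forces any eigenvector $\psi$ of $H_t$ at $\lambda_0$ to satisfy $V\psi=0$, hence $H_0\psi=\lambda_0\psi$, and then the identity $(\psi,E_{H_0}(\lambda)Ve_n)_{\cH}=(VE_{H_0}(\lambda)\psi,e_n)_{\cH}=0$ (valid because $E_{H_0}(\lambda)\psi\in\{0,\psi\}$ for an eigenvector) lets you apply the generating hypothesis for $H_0$ directly to conclude $\psi=0$. Your route is shorter and bypasses the $t$-invariance of the cyclic subspace entirely; the paper's route establishes that invariance as a byproduct. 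All the supporting facts you use ($\ol{\ran(V^{1/2})}=\ol{\ran(V)}=\ker(V)^{\bot}$, the countability bookkeeping, the transfer of the eigenvector from $H_t$ to $H_0$) check out, so the argument is complete.
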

\begin{proof}
Fix $\lambda_0\in \bbR$.  Extend $H_t$ from $t\in[0,1]$ to $t\in \bbR$ by
\begin{equation}
H_t = H_0 + t V,\quad t\in \bbR.
\end{equation}
If 
\begin{equation}
E_{H_t}(\{\lambda_0\}) := E_{H_t}(\lambda_0) - E_{H_t}(\lambda_0-0),\quad t\in \bbR,
\end{equation}
then spectral averaging (cf., e.g., \cite[Corollary 4.2 and its proof]{CH94}) immediately yields
\begin{equation}\lb{2.7}
\int_{\bbR} \frac{(v,V^{1/2}E_{H_t}(\{\lambda_0\})V^{1/2}v)_{\cH}}{1+t^2}dt=0,\quad v\in \cH.
\end{equation}
Since $V^{1/2}E_{H_t}(\{\lambda_0\})V^{1/2}\geq 0$, $t\in \bbR$, \eqref{2.7} implies
\begin{equation}\lb{2.8}
V^{1/2}E_{H_t}(\{\lambda_0\})V^{1/2}v=0,\quad t\in \bbR\backslash N_v, \; v\in \cH,
\end{equation}
where $N_v\subseteq \bbR$ is (Lebesgue) measurable with $|N_v|=0$, $v\in \cH$.  Applying \eqref{2.8} to each element of a complete orthonormal system for $\cH$, we obtain a (Lebesgue) measurable set $N\subseteq \bbR$ with $|N|=0$ for which
\begin{equation}\lb{2.9}
V^{1/2}E_{H_t}(\{\lambda_0\})V^{1/2}=0,\quad t\in \bbR\backslash N.
\end{equation}
Now, \eqref{2.9} implies $E_{H_t}(\{\lambda_0\})|_{\ran(V)}=0$, $t\in \bbR\backslash N$.  Indeed, by \eqref{2.9}, 
\begin{align}
\big\|E_{H_t}(\{\lambda_0\})Vv \big\|_{\cH}^2&= (V v,E_{H_t}(\{\lambda_0\})Vv)_{\cH}\no\\
&= \big(V^{1/2}v, [V^{1/2}E_{H_t}(\{\lambda_0\})V^{1/2}]V^{1/2}v\big)_{\cH}=0,\lb{2.10}\\
&\hspace*{4.5cm} t\in \bbR\backslash N,\; v\in \cH.\no
\end{align}
Next one notes, 
\begin{equation}\lb{2.11}
\begin{split}
E_{H_t}(\{\lambda_0\})(H_t-zI_{\cH})^{-1}Ve_n= (H_t-zI_{\cH})^{-1}E_{H_t}(\{\lambda_0\})Ve_n=0,&\\
\quad n\in \cI,\; z\in \bbC\backslash \bbR,\; t\in \bbR\backslash N,&
\end{split}
\end{equation}
where $\{e_n\}_{n\in \cI}$, $\cI\subseteq \bbN$ an appropriate index set, is any complete orthonormal system in $\cH$.  By a standard resolvent identity, one obtains $t$-invariance of the cyclic subspace for $H_t$ generated by $\ran(V)$ in the form:
\begin{equation}\lb{2.12}
\begin{split}
&\ol{\linspan \, \{(H_0-zI_{\cH})^{-1} V e_n\, \vert\, n \in \cI, \, z\in \bbC \backslash \bbR\}}\\
&\quad =\ol{\linspan \, \{(H_t-zI_{\cH})^{-1} V e_n\, \vert\, n \in \cI, \, z\in \bbC \backslash \bbR\}},\quad t\in \bbR.
\end{split}
\end{equation}
Since the first subspace in \eqref{2.12} coincides with $\cH$, \eqref{2.11} and boundedness of the spectral projections $E_{H_t}(\{\lambda_0\})$ imply
\begin{equation}\lb{2.13}
E_{H_t}(\{\lambda_0\})u=0,\quad u\in \cH,\; t\in \bbR\backslash N.
\end{equation}
Since $N$ is a set with zero Lebesgue measure, $\lambda_0\notin \sigma_{p}(H_t)$ for 
a.e.~$t\in \bbR$.  The result in \eqref{2.6} follows immediately.
\end{proof}

Next, we continue this line of thought and relate it to spectral theory for linear pencils of operators. 
Considering the standard decomposition of $V$
\begin{equation}
V = V_+ - V_-, \quad 0 \leq V_{\pm} = [|V| \pm V]/2 \in \cB(\cH),      \lb{3.54}
\end{equation}

\noindent
provided by the spectral theorem for $V$, we now modify this to a more general 
(highly nonunique) decomposition 
\begin{equation}\lb{2.18}
V = V_1 - V_2, \quad 0 \leq V_j \in \cB(\cH). 
\end{equation}
In particular, we may assume that either 
\begin{equation}
V_1 \geq \varepsilon I_{\cH}, 
\end{equation}
or, alternatively, 
\begin{equation}
V_2 \geq \varepsilon I_{\cH}, 
\end{equation}
(e.g., by adding an appropriate multiple of the identity to $V_1$ or $V_2$). In this case, the 
basic eigenvalue equation 
\begin{equation}
H_t \psi = \lambda_0 \psi, \quad \psi \in \cH, \; \lambda_0 \in \bbR, \; t \in [0,1],    \lb{3.58} 
\end{equation}
is equivalent to 
\begin{align}
\begin{split} 
\big[V_1^{-1/2} H_0 V_1^{-1/2} - \lambda_0 V_1^{-1}\big] \big(V_1^{1/2} \psi\big) 
= - t \big[I_{\cH} - V_1^{-1/2} V_2 V_1^{-1/2} \big] \big(V_1^{1/2} \psi\big)&      \lb{3.59} \\ 
\text{ if } \, V_1^{-1} \in \cB(\cH),& 
\end{split} 
\end{align}
or, alternatively, to 
\begin{align}
\begin{split} 
\big[V_2^{-1/2} H_0 V_2^{-1/2} - \lambda_0 V_2^{-1}\big] \big(V_2^{1/2} \psi\big) 
= t \big[I_{\cH} - V_2^{-1/2} V_1 V_2^{-1/2}\big] \big(V_2^{1/2} \psi\big)&     \lb{3.60} \\ 
\text{ if } \, V_2^{-1} \in \cB(\cH).& 
\end{split}  
\end{align}
Thus, the standard self-adjoint eigenvalue problem \eqref{3.58} is equivalent to a 
linear pencil eigenvalue problem (w.r.t. the parameter $t \in [0,1]$) of the form
\begin{equation}
A(\lambda_0) f = t B f, \quad f \in \cH, \; t \in [0,1],       \lb{3.61} 
\end{equation}
where 
\begin{equation}
A(\lambda_0) = A(\lambda_0)^* \in \cB(\cH), \quad B = B^* \in \cB(\cH).
\end{equation}
That is, the underlying linear self-adjoint pencil is of the form $A(\lambda_0) - t B$, $t\in [0,1]$.

Generally speaking, the pencil $A(\lambda_0) - t B$, $t\in [0,1]$, readily leads to non-real eigenvalues (i.e., becomes an inequivalent non-self-adjoint spectral problem) even if $A(\lambda_0)$ as well as $B$ are self-adjoint, as the following elementary example illustrates.

\begin{example} \cite[\S VI.1.1, Example 1.14]{SS90} \lb{e2.6} 
In the Hilbert space $\bbC^2$, the following linear pencil
\begin{equation}
\begin{pmatrix}
1 & 0\\
0 & -1
\end{pmatrix}
f = t 
\begin{pmatrix}
0 & 1\\
1 & 0
\end{pmatrix}
\end{equation}
has eigenvalues $\pm i$ even though both matrix coefficients are self-adjoint.
\end{example}

While the eigenvalues of a standard self-adjoint eigenvalue problem in a separable Hilbert space are necessarily countable, no such result holds for pencil eigenvalue problems, as demonstrated by the following example kindly communicated to us by T.\ Azizov \cite{Az13}. 

\begin{example} [\cite{Az13}] \lb{e2.7} 
In the Hilbert space $\cH$ consider the complete orthonormal basis $\{e_k\}_{k \in \bbN}$ and introduce the shift operator $S_+$
\begin{equation}
S_+ e_k = e_{k+1}, \quad k \in \bbN, 
\end{equation}  
such that 
\begin{equation}
S_+^* e_1 = 0, \quad S_+^* e_k = e_{k-1}, \quad k \in \bbN, \; k \geq 2. 
\end{equation}  
Then, with $\bbD = \{z \in \bbC \,|\, |z| < 1\}$, the open unit disk in $\bbC$, one has the following facts 
$($cf., e.g., \cite[p.\ 468--469]{Ku01}$)$,
\begin{align}
& \sigma_{p} (S_+) = \sigma_{r} (S_+^*) = \emptyset,    \lb{3.65} \\
& \sigma_{r} (S_+) = \sigma_{p} (S_+^*) = \bbD,    \lb{3.66} \\
& \sigma_{c} (S_+) = \sigma_{c} (S_+^*) = \partial \bbD,    \lb{3.67} \\
& \sigma (S_+) = \sigma (S_+^*) = \ol \bbD.       \lb{3.68} 
\end{align} 
Next, introduce in $\cK = \cH \oplus \cH$ the self-adjoint $2 \times 2$ block operator matrices 
\begin{equation}
H_0 = \begin{pmatrix} 0 & S_+ \\ S_+^* & 0 \end{pmatrix}, \quad 
V = \begin{pmatrix} 0 & I_{\cH} \\ I_{\cH} & 0 \end{pmatrix}.     \lb{3.69} 
\end{equation}
Then $V^2 = I_{\cK} = \begin{pmatrix} I_{\cH} & 0 \\ 0 & I_{\cH} \end{pmatrix}$ shows that 
the linear pencil eigenvalue problem
\begin{equation}
H_0 f = t V f, \quad f \in \cK, \; t \in \bbC,      \lb{3.70} 
\end{equation}
is equivalent to the standard $($non-self-adjoint\,$)$ eigenvalue problem
\begin{equation}
V H_0 f = t f, \quad f \in \cK, \; t \in \bbC, \quad 
V H_0 = \begin{pmatrix} S_+^* & 0 \\ 0 & S_+ \end{pmatrix}.     \lb{3.71} 
\end{equation}
Together with \eqref{3.65}--\eqref{3.68} this yields
\begin{align}
\sigma (V H_0) = \ol \bbD, \quad \sigma_{p} (V H_0) = \bbD,     \lb{3.72} 
\end{align}
in particular, each $t \in \bbD$ is an eigenvalue for \eqref{3.70}. 
\end{example}

\begin{remark} \lb{r2.8} 
Further generalizations of Example \ref{e2.7} are possible: \\[.6mm]
$(i)$ In Example \ref{e2.7} the operator $V$ is invertible and therefore not compact. This may unintentionally lead to the  misunderstanding that the eigenvalue phenomenon is related to the noncompactness of $V$. However, a suitable reconstruction of Example \ref{e2.7}, replacing the identity operators in $V$ (see \eqref{3.69}) by a compact self-adjoint diagonal operator $\Lambda _2$ (in the basis  $\{e_k\}_{k \in \bbN}$) and the operators $S_+$ (resp., $S_+^{*}$) in $ H_0$ by 
$S_+\Lambda_1$ (resp., $\Lambda_1$$S_+^{*}$), settles this issue. Here $\Lambda_1$ is again a self-adjoint diagonal operator in  $\cH$. A suitable choice of the diagonal operators  leads to an example where the operator $V$ is compact, but the set of eigenvalues $ { t  }$ of the spectral problem  \eqref{3.70} covers the whole complex plane. Surely, the critical condition that the range of $V$ is a generating subspace for $H_0$ can be satisfied. \\[.6mm]
$(ii)$ Another generalization is related to the case where the positive part of the self-adjoint operator $V$ is invertible (on a suitable subspace of  $\cH$) and its negative part is a compact operator.  In that case, as explicit examples show, the point spectrum of the spectral problem   \eqref{3.70} may cover the whole disc $ \bbD$  again. We note that the vanishing of the negative part according to the Lemma \ref{l2.5} leads to the fact that the set of eigenvalues has Lebesgue measure 0.
\end{remark}

Next, to put Conjecture \ref{c2.2} to rest once and for all, we offer the following elementary three-dimensional counterexample:

\begin{example} \lb{e2.9} 
Consider $\cH = \bbC^3$, 
\begin{equation}
H_0 = \begin{pmatrix} 1 & 1 & 1 \\ 1 & 1 & 1 \\ 1 & 1 & 0  \end{pmatrix} = H_0^*, \quad 
V = \begin{pmatrix} 1 & 0 & 0 \\ 0 & -1 & 0 \\ 0 & 0 & 0  \end{pmatrix} = V^*. 
\end{equation}
Then,
\begin{equation}
\ker(H_0) = {\rm lin.span} \begin{pmatrix} 1 \\ -1 \\ 0 \end{pmatrix}, 
\quad \ran(V) = \bbC^2 \oplus \{0\},
\end{equation}
and since 
\begin{equation}
H_0 \begin{pmatrix} 1 \\ 0 \\ 0 \end{pmatrix} = \begin{pmatrix} 1 \\ 1 \\ 1 \end{pmatrix},
\end{equation}
one infers that 
\begin{equation}
\ran(V) \dotplus H_0 \ran(V) = \bbC^3 
\end{equation}
$($with $\dotplus$ denoting the direct, though, not necessarily orthogonal direct 
sum, which we denote by $\oplus$\,$)$. Finally, introducing 
\begin{equation}
f_t = \begin{pmatrix} 1 \\ -1 \\ -t \end{pmatrix}, \quad t \in \bbC, 
\end{equation}
one obtains
\begin{equation}\lb{2.41}
H_0 f_0 = 0, \quad (H_0 + t V) f_t = 0 \, \text{ for each $t \in \bbC$,}
\end{equation}
illustrating in dramatic fashion that nonnegativity of $V$ cannot be omitted in Lemma \ref{l2.5}.

To cast \eqref{2.41} as an equivalent problem for a linear pencil of operators, decompose $V$ according to $V = V_1 - V_2$ with
\begin{equation}
V_1 = I_3 = \begin{pmatrix}
1 & 0 & 0\\
0 & 1 & 0\\
0 & 0 & 1
\end{pmatrix},
\quad 
V_2 =\begin{pmatrix}
0 & 0 & 0\\
0& 2 & 0\\
0 & 0 & 1
\end{pmatrix},
\end{equation}
which yields a decomposition of $V$ of the form \eqref{2.18}.  By \eqref{3.59}, 
the eigenvalue problem \eqref{2.41} is equivalent to the linear pencil eigenvalue problem
\begin{equation}
H_0f = -t(I_3 - V_2)f,\quad f\in \bbC^3,\, t\in \bbC.
\end{equation}
\end{example}

\begin{remark} \lb{r2.10} 
In Example \ref{e2.9}, both matrices $H_0$ and $V$ fail to be invertible.  Surely in a finite dimensional space $\cH$, the invertibility of at least one of $H_0$ or $V$ immediately leads to the reduction of the spectral problem  \eqref{3.70} to the standard eigenvalue problem in the spectral parameter $t$ or $ t^{-1}$ and therefore to the finiteness of the set of $t$ values. 
\end{remark}


\medskip

{\bf Acknowledgments.} We are indebted to Fedor Sukochev for raising a question that naturally led to what we called Conjecture \ref{c2.2}. Helpful discussions with Tomas Azizov, Rostyslav Hryniv, Vladimir Derkach, Heinz Langer, Michael Levitin, Mark Malamud, Marco Marletta, Leonid Parnovski, 
Fritz Philipp, Andrei Shkalikov, Barry Simon, G\"unter Stolz, Vadim Tkachenko, and Christiane Tretter are gratefully acknowledged. Especially, we thank Tomas Azizov for sending us Example \ref{e2.7} and Leonid Parnovski for pointing out Friedlander's paper \cite{Fr02} to us.  

S.\,N. is grateful to the Department of Mathematics of the University of Missouri where part of this work was completed while on a Miller Scholar Fellowship in February--March of 2014.


\end{document}